\newtheorem{theorem}{Theorem}[section]
\newtheorem{lemma}[theorem]{Lemma}
\newtheorem{remark}[theorem]{Remark}
\newcommand{\qed}{\hfill $\square$\medskip}
\begin{document}

\title{On the signed domination number of some Cayley graphs}

\author{
S. Alikhani$^{1,}$\footnote{Corresponding author}
\and F. Ramezani$^1$  \and E. Vatandoost$^2$ 
}

\date{\today}

\maketitle

\begin{center}
$^1$Department of Mathematics, Yazd University, 89195-741, Yazd, Iran\\
{\tt alikhani@yazd.ac.ir, f.ramezani@yazd.ac.ir}
\medskip 

$^2$Department of Basic Science, Imam Khomeini International University, Qazvin, Iran
{\tt  vatandoost@ikiu.ac.ir}
\end{center}

%%%%%%%%%%%%%%ABSTRACT%%%%%%%%%%%%%%%%%%%%%%%%%%%%%%%%%%%%%%%%%%%%%%%%%%%%%%%%%%%%

\begin{abstract}
A {\it signed
	dominating function} of graph $\Gamma$ is a function $g :
V(\Gamma) \longrightarrow \{-1,1\}$ such that $\sum_{u \in N[v]}g(u) >0$ for each $v \in
V(\Gamma)$. The {\it signed domination number} $\gamma_{_S}(\Gamma)$ is the
minimum weight of a signed dominating function on $\Gamma$.  Let $G=\langle S \rangle$ be a finite group such that $e \not\in S=S^{-1}$. 
In this paper, we obtain the signed domination number of $Cay(S:G)$ based on cardinality of $S$. Also we determine the classification of group $G$ by $|S|$ and $\gamma_{_S}(Cay(S:G))$.
\end{abstract}

\noindent{\bf Keywords:}  Cayley graph, signed domination number, finite group. 

\medskip
\noindent{\bf AMS Subj.\ Class.:} 05C50, 05C25

%%%%%%%%%%%%%%%%%%%%%%%%%%%%%%%%%%%%%%%%%%%%%%%%%%%%%%%%%%%%%%%%%%%%%%%%%%%%%%%%%
%%%%%%%%%%%%%%%%%%%%%%%%%%%%%%%%%%%%%%%%%%%%%%%%%%%%%%%%%%%%%%%%%%%%%%%%%%%%%%%%%
\section{Introduction}
%%%%%%%%%%%%%%%%%%%%%%%%%%%%%%%%%%%%%%%%%%%%%%%%%%%%%%%%%%%%%%%%%%%%%%%%%%%%%%%%%
%%%%%%%%%%%%%%%%%%%%%%%%%%%%%%%%%%%%%%%%%%%%%%%%%%%%%%%%%%%%%%%%%%%%%%%%%%%%%%%%%

Let $\Gamma$ be a simple graph on vertex set $V(\Gamma)$ and edge
set $E(\Gamma)$. The number of edges of the shortest walk joining $v$
and $u$ is called the {\it distance} between $v$ and $u$
and denoted by $d(v,u)$. The maximum value of the distance
function in a connected graph $\Gamma$ is called the {\it
	diameter} of $\Gamma$ and denoted by $diam(\Gamma)$.
A graph $\Gamma$ is said to be {\it regular} of degree $k$ or, $k${\it -regular} if every vertex has degree $k$.
A { \it $t-$partite} graph is one whose vertex set can be partitioned into $t$ subset, or part, in such a way that no edge has both end in a same part. A {\it complete $t-$partite} graph is a $t-$partite graph in which every pair of vertices in seprate parts are adjacent and denoted by $K_{n_1,\ldots,n_t}$ if $V=V_1 \cup \ldots \cup V_t$ and $|V_k|=n_k$.
Let $G$ be a non-trivial group, $S\subseteq G$, $S^{-1}=\{s^{-1}
| s\in S \}$, and also $e \notin S=S^{-1}$.
The {\it Cayley graph } of $G$ denoted by $Cay(S:G)$, is a graph with vertex set $G$ and two vertices $a$ and $b$ are adjacent if and only if $ab^{-1}\in S$.
If $S$ generates $G$, then $Cay(S:G)$ is connected. Also Cayley graph is a simple and vertex transitive.\\
For a  vertex $v \in V(\Gamma)$, the closed neighborhood $N[v]$ of
$v$ is the set consisting of $v$ and all of its neighbors. For a
function $g:V(\Gamma)\longrightarrow \{-1,1\}$ and a vertex $v
\in V$ we define $g[v]=\sum_{u \in N[v]} g(u)$.

 A {\it signed
	dominating function} of $\Gamma$ is a function $g :
V(\Gamma) \longrightarrow \{-1,1\}$ such that $g [v]>0$ for all $v \in
V(\Gamma)$. The {\it weight} of a function $g$ is $\omega
(g)=\sum_{v \in V(\Gamma)}
g(v)$. The {\it signed domination number} $\gamma_{_S}(\Gamma)$ is the
minimum weight of a signed dominating function on $\Gamma$.
 A
signed dominating function of weight $\gamma_{_S}(\Gamma)$ is called
a $\gamma_{_S}-$function. Also $V^-$ is the set of vertices which asigned $-1$  by a $\gamma_{_S}-$function and $V^-_g$ is $\{v\in \Gamma: \, g(v)=-1 \}$ where $g$ is a signed dominating function. The concept of signed domination number was defined in
\cite{dunbar} and has been studied by several authors (see for
instance \cite{dunbar,3,4,7,10,Zelinka}). In \cite{vatandoost} domination number as well as signed
domination numbers of $Cay(S:G)$ for all cyclic group $G$ of order $n$, where
$n \in \lbrace p^m, pq \rbrace$ and $S=\lbrace k<n \, : \, gcd(k,n)=1  \rbrace$ are investigated. 
\medskip 
Motivated by \cite{vatandoost}, in this paper we  determine the signed domination number of Cayley graphs $Cay(S:G)$ for such pairs of $G$ and $S$. Also we obtain the group $G$ based on  $\gamma_{_S}(Cay(S:G)).$

\section{Computation of  $\gamma_{_S}(Cay(S:G))$}
In this section, $G=\langle S \rangle$ is a finite group of order $n$ and $S$ denote an inverse closed subset of $G  \setminus \lbrace e \rbrace$. We determine the signed domination number of $Cay(S:G)$ where $n-4\le |S|\le n-1$. We need the following lemma and theorem:

\begin{lemma} \rm{\cite{7}}\label{kn} If $\Gamma$ is a complete graph of order $n$, then
	\begin{eqnarray*}\gamma_{_S}(\Gamma )= \left \{ \begin{array}{ll}
			$1$ &  \text{if} ~$n$ ~ \text{is odd},\\
			$2$ & \text{if} ~$n$~ \text{is even}.
			
		\end{array} \right
		.\end{eqnarray*}
\end{lemma}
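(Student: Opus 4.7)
The key observation is that in a complete graph $K_n$, we have $N[v]=V(\Gamma)$ for every vertex $v$, so for any function $g:V(\Gamma)\to\{-1,1\}$ the quantity $g[v]$ is independent of $v$ and equals the total weight $\omega(g)$. Thus the signed dominating condition $g[v]>0$ for all $v$ collapses to the single condition $\omega(g)>0$, and the problem becomes: what is the minimum positive value of $\omega(g)$?

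The plan is then a short parity argument. Writing $p$ for the number of vertices with $g$-value $+1$ and $q$ for the number with value $-1$, we have $p+q=n$ and $\omega(g)=p-q$, so $\omega(g)\equiv n\pmod{2}$. When $n$ is odd, $\omega(g)$ must be odd, hence $\omega(g)\ge 1$; when $n$ is even, $\omega(g)$ must be even and positive, hence $\omega(g)\ge 2$.

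To finish, I would exhibit explicit functions attaining these lower bounds. For $n$ odd, assign $+1$ to any $(n+1)/2$ vertices and $-1$ to the remaining $(n-1)/2$ vertices, giving $\omega(g)=1$. For $n$ even, assign $+1$ to any $(n+2)/2$ vertices and $-1$ to the remaining $(n-2)/2$ vertices, giving $\omega(g)=2$. In both cases the function is a signed dominating function by the first observation, so the bounds are tight.

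There is essentially no obstacle here: the single nontrivial input is the fact that $K_n$ is regular with closed neighborhoods equal to the whole vertex set, after which the result is just a parity remark together with an explicit construction.
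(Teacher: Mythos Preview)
Your proof is correct. The paper does not give its own proof of this lemma --- it simply cites the result from \cite{7} --- so there is nothing to compare against; your argument is the standard one and is entirely adequate.
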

\begin{theorem} \label{1} \rm{\cite{dunbar,hening}} Let $\Gamma$ be a $k-$regular graph of order $n$. If $k$ is odd, then $\gamma_{_S}(\Gamma )\geq \frac{2n}{k+1}$ and if $k$ is even, then $\gamma_{_S}(\Gamma )\geq  \frac{n}{k+1} .$
\end{theorem}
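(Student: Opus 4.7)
The plan is to prove Theorem \ref{1} by a standard double-counting argument, applied to the closed-neighborhood sums $g[v]$ of an arbitrary signed dominating function $g$.

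First I would use the regularity to fix the size of each closed neighborhood: since $\Gamma$ is $k$-regular, $|N[v]| = k+1$ for every $v \in V(\Gamma)$. Consequently $g[v]$ is a sum of $k+1$ values, each equal to $\pm 1$, which forces $g[v] \equiv k+1 \pmod 2$. Combined with the signed-domination condition $g[v] \ge 1$, this gives a sharper pointwise bound depending on parity: if $k$ is even then $k+1$ is odd and $g[v] \ge 1$; if $k$ is odd then $k+1$ is even, so $g[v]$ must be even, and $g[v] \ge 1$ can be strengthened to $g[v] \ge 2$.

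The key step is then to sum $g[v]$ over all $v \in V(\Gamma)$ and interchange the order of summation. Since neighborhoods are symmetric in an undirected graph, $u \in N[v]$ iff $v \in N[u]$, so each vertex $u$ contributes $g(u)$ exactly $|N[u]| = k+1$ times:
\[
\sum_{v \in V(\Gamma)} g[v] \;=\; \sum_{v \in V(\Gamma)} \sum_{u \in N[v]} g(u) \;=\; \sum_{u \in V(\Gamma)} (k+1)\, g(u) \;=\; (k+1)\,\omega(g).
\]
Combining this identity with the pointwise bounds from the previous paragraph yields $(k+1)\omega(g) \ge n$ when $k$ is even and $(k+1)\omega(g) \ge 2n$ when $k$ is odd, which after dividing by $k+1$ gives exactly the two inequalities claimed. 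Taking $g$ to be a $\gamma_{_S}$-function delivers the bound for $\gamma_{_S}(\Gamma)$.

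There is no real obstacle; the only point that needs care is the parity observation, which is what makes the odd-$k$ bound twice as strong as the naive bound obtained from $g[v] \ge 1$ alone. Everything else is bookkeeping in the double sum.
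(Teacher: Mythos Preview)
Your argument is correct and is precisely the standard double-counting proof of this bound. Note, however, that the paper does not supply its own proof of Theorem~\ref{1}: the result is quoted from \cite{dunbar,hening} and used as a black box, so there is no in-paper argument to compare against. Your proof is exactly the one given in those references.
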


\begin{theorem} Let $G$ be a group of order $n$. Then $\gamma_{_S}(Cay(S:G))=1$ if and
	only if $S=G\setminus \{e \}$ and $n$ is odd.
\end{theorem}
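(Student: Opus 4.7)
The equivalence naturally splits into two implications, and the forward (backward in the usual sense) direction is immediate from Lemma~\ref{kn}: if $S=G\setminus\{e\}$, then every pair of distinct vertices is adjacent in $Cay(S:G)$, so $Cay(S:G)\cong K_n$; since $n$ is odd, Lemma~\ref{kn} gives $\gamma_{_S}(Cay(S:G))=\gamma_{_S}(K_n)=1$. So the work is in the other direction.

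For the converse, I would assume $\gamma_{_S}(Cay(S:G))=1$ and deduce that $|S|=n-1$ together with $n$ odd. Set $k=|S|$, so $Cay(S:G)$ is a $k$-regular graph on $n$ vertices, with $k\le n-1$ because $e\notin S$. First I would handle parity: for any signed dominating function $g$, $\omega(g)=\sum_{v}g(v)$ is a sum of $n$ values from $\{-1,1\}$, so $\omega(g)\equiv n\pmod 2$. Hence $\gamma_{_S}=1$ forces $n$ to be odd.

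Next I would apply Theorem~\ref{1} to squeeze out $k=n-1$. If $k$ were odd, then $1=\gamma_{_S}(Cay(S:G))\ge \frac{2n}{k+1}$ would give $k\ge 2n-1$, which is impossible since $k\le n-1$ (assuming $n\ge 1$). Therefore $k$ is even, and then Theorem~\ref{1} yields $1\ge \frac{n}{k+1}$, i.e.\ $k\ge n-1$. Combined with $k\le n-1$, this forces $k=n-1$, which means $S=G\setminus\{e\}$ and $Cay(S:G)\cong K_n$. An application of Lemma~\ref{kn} then confirms consistency (and the parity of $n$ can also be reread off from the fact that $k=n-1$ is even).

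I do not expect any real obstacle here: the argument is essentially a tight application of the two stated results together with a one-line parity observation. The only thing to be a little careful about is separating the two parities of $k$ when invoking Theorem~\ref{1}, since the odd case superficially looks like it could also be satisfiable but is in fact ruled out immediately by $k\le n-1$.
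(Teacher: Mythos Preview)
Your argument is correct and mirrors the paper's own proof almost exactly: both split on the parity of $k=|S|$, eliminate odd $k$ via $1\ge \frac{2n}{k+1}\Rightarrow k\ge 2n-1$, and for even $k$ use $1\ge \frac{n}{k+1}$ to force $k=n-1$, deducing $n$ odd from the evenness of $k$. Your extra parity remark $\omega(g)\equiv n\pmod 2$ is a nice touch but ultimately redundant, and for the converse the paper simply says ``clear'' where you invoke Lemma~\ref{kn}.
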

\begin{proof} Let
	$\gamma_{_S}(Cay(S:G))=1$. We know that $Cay(S:G)$ is a $|S|-$regular graph. If
	$|S|$ is odd, then $\gamma_{_S}((Cay(S:G))\geq
	\frac{2n}{|S|+1}$ by Theorem \ref{1} and so $|S|\geq (2n-1)$. This
	is impossible. Hence, $|S|$ is even and by Theorem \ref{1},
	$\gamma_{_S}(Cay(S:G))\geq \frac{n}{|S|+1}$. So $|S|=n-1$ and $n$ is odd. The converse is clear.\qed
\end{proof}
\begin{theorem}\label{3} If $G$ is a group of order $n$ and  $|S|=n-2$, then $\gamma_{_S}(Cay(S:G))=2$.
\end{theorem}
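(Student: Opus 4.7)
The plan is first to pin down the structure of $Cay(S:G)$ when $|S|=n-2$. Since $S\subseteq G\setminus\{e\}$ and $|G\setminus\{e\}|=n-1$, exactly one non-identity element $a$ is missing from $S$. The condition $S=S^{-1}$ forces the singleton $\{a\}$ to be inverse-closed, so $a=a^{-1}$ is an involution; in particular, by Lagrange, $n$ is even. In $Cay(S:G)$, vertices $g$ and $h$ are adjacent iff $gh^{-1}\in S$, i.e.\ iff $gh^{-1}\neq e$ and $gh^{-1}\neq a$, so the unique non-neighbor of $g$ is $ag$. Hence $Cay(S:G)$ is $K_n$ minus the perfect matching $\{\{g,ag\}:g\in G\}$.

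For the lower bound, the graph is $(n-2)$-regular with $n-2$ even, so Theorem \ref{1} gives $\gamma_{_S}(Cay(S:G))\ge \frac{n}{n-1}>1$. Because the weight $\omega(g)=n-2|V_g^-|$ of any $\{-1,1\}$-valued function on $n$ vertices has the same parity as $n$, and $n$ is even, we conclude $\gamma_{_S}(Cay(S:G))\ge 2$.

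For the upper bound, I would exploit the almost-complete structure directly. Since $N[v]=V\setminus\{av\}$ for every $v$, any function $g:V\to\{-1,1\}$ satisfies
\[
g[v]=\omega(g)-g(av).
\]
Choose any $g$ that labels exactly $(n-2)/2$ vertices by $-1$ and the remaining $(n+2)/2$ vertices by $+1$; then $\omega(g)=2$ and $g[v]\ge 2-1=1>0$ for every $v\in V$. So $g$ is a signed dominating function of weight $2$, giving $\gamma_{_S}(Cay(S:G))\le 2$.

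The only real obstacle is recognizing the graph: once one sees that $|S|=n-2$ with $S=S^{-1}$ forces the missing element to be an involution and the graph to be $K_n$ minus a perfect matching, both bounds fall out immediately from the formula $g[v]=\omega(g)-g(av)$ and a parity remark on top of Theorem \ref{1}.
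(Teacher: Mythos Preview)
Your proof is correct and follows essentially the same approach as the paper's: both identify that $G\setminus S=\{e,a\}$ with $a$ an involution (so $n$ is even), use Theorem~\ref{1} for the lower bound, and construct a signed dominating function with exactly $\tfrac{n}{2}-1$ vertices labeled $-1$. Your version is slightly cleaner in two respects: you make the parity argument for the lower bound explicit, and you use the identity $g[v]=\omega(g)-g(av)$ to verify the upper bound for \emph{any} placement of the $-1$'s, whereas the paper places them all inside $S$ and checks cases; but the underlying idea is the same.
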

\begin{proof}
	Since $G$ is $|S|-$regular, so by Theorem \ref{1}, $\gamma_{_S}(Cay(S:G))\geq \frac{n}{|S|+1}$ and so $|V^-|\leq \frac{n}{2}-1$. Let
	$S=\{v_1,\ldots,v_{n-2}\}$ and $G=S \cup \{e,a\}$.
	
	\noindent Define $f:V(Cay(S:G)) \rightarrow
	\{-1,1 \}$ such that
	\begin{eqnarray*} f(v_i)= \left \{
		\begin{array}{ll} $-1$     &
			i=1,\ldots,\frac{n}{2}-1
			,\\
			$1$      &  $otherwise$.
			
		\end{array} \right
		.\end{eqnarray*} 
	Since $deg(a)=|S|$ and $a \notin S$, so
	$N(a)=S$ and also $f[e]=f[a]=1$. For every $v_i \in S$ there is
	exactly one $v_j \in S$ such that $v_i$ and $v_j$ are not
	adjacent. Suppose that $f(v_i)=-f(v_j)=-1$, then $f[v_i]=1$. Hence,
	$f[v]\geq 1$ for every $v \in V(Cay(S:G))$. Since $|V^-_f|=\frac{n}{2}-1$, $f$ is a
	$\gamma_{_S}-$function and so $\gamma_{_S}(Cay(S:G))=\omega(f)=2$. \qed
\end{proof}

\begin{theorem}\label{5} Let $G$ be a group of order $n$ and $|S|=n-3$. Then
	\begin{eqnarray*}
		\gamma_{_S}(Cay(S:G))= \left \{
		\begin{array}{ll} 3     &
			\text{if} ~$n$ ~ \text{is odd},\\
			4      &  \text{if} ~$n$ ~ \text{is even}.
			
		\end{array} \right
		.\end{eqnarray*}
	
\end{theorem}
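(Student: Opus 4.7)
The plan is as follows. Since $|S|=n-3$, the Cayley graph $Cay(S:G)$ is $(n-3)$-regular, so every vertex $v$ has exactly two non-neighbors in $G\setminus\{v\}$, which I will call $a_v$ and $b_v$. For any $g\colon V\to\{-1,1\}$ one then has the basic identity
\[ g[v]=\omega(g)-g(a_v)-g(b_v), \]
and because $|N[v]|=n-2$ both $g[v]$ and $\omega(g)$ have the same parity as $n$. From here the two parities of $n$ are handled rather differently.

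For odd $n$ I would derive the lower bound from the double count $(n-2)\,\omega(g)=\sum_v g[v]\geq n$ coming from Theorem \ref{1}-style averaging, which gives $\omega(g)>1$ and, combined with the odd parity of $\omega(g)$, forces $\omega(g)\geq 3$. For the upper bound I would mimic the construction of Theorem \ref{3}: assign $-1$ to any $(n-3)/2$ elements of $V$ and $+1$ to the remaining $(n+3)/2$. The identity then yields $g[v]=3-g(a_v)-g(b_v)\in\{1,3,5\}$, so $g$ is a signed dominating function of weight $3$.

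For even $n$, parity forces $\omega(g)$ to be even, and the trivial bound $g[v]\leq\omega(g)+2$ rules out $\omega(g)\leq -2$, so it remains to exclude $\omega(g)\in\{0,2\}$. The natural auxiliary object is the complement $\bar\Gamma$ of $Cay(S:G)$, which is $2$-regular and hence a disjoint union of cycles; in $\bar\Gamma$ the two neighbors of $v$ are precisely $a_v$ and $b_v$. If $\omega(g)=2$, then $g[v]\geq 2$ forces $g(a_v)+g(b_v)\leq 0$, so at least one of $a_v,b_v$ lies in $V^-_g$; i.e.\ $V^-_g$ is a total dominating set of $\bar\Gamma$. Double-counting incidences then gives $2|V^-_g|=\sum_{u\in V^-_g}\deg_{\bar\Gamma}(u)\geq n$, contradicting $|V^-_g|=(n-2)/2$. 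If $\omega(g)=0$, then $g[v]\geq 2$ forces $g(a_v)=g(b_v)=-1$ for every $v$; tracing around any cycle of $\bar\Gamma$ propagates membership in $V^-_g$ to every vertex of that cycle, which drives $V^+_g=\emptyset$ and contradicts $\omega(g)=0$. Hence $\omega(g)\geq 4$. The matching upper bound is immediate: assign $-1$ to any $(n-4)/2$ vertices and $+1$ to the remaining $(n+4)/2$; the identity gives $g[v]=4-g(a_v)-g(b_v)\geq 2$.

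The main obstacle is the even case, specifically ruling out $\omega(g)=2$; the odd-case bound and both constructions are essentially one-line arguments using the identity for $g[v]$. The key insight that unlocks the even case is recognising the required condition as total domination in the $2$-regular complement $\bar\Gamma$, where the elementary handshake inequality $|D|\geq |V(\bar\Gamma)|/2$ finishes the job.
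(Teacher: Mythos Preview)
Your proof is correct. The odd case and both upper-bound constructions match the paper's argument essentially verbatim (the paper also picks any $(n-3)/2$ or $(n-4)/2$ vertices and checks $g[v]\geq 1$ or $2$ via the same count). The genuine difference is in the even-case lower bound. The paper simply invokes Theorem~\ref{1}: since $|S|=n-3$ is odd, $\gamma_{_S}\geq 2n/(n-2)>2$, and because $\omega(g)\equiv n\pmod 2$ this forces $\gamma_{_S}\geq 4$ in one line. You instead work by hand, eliminating $\omega(g)=0$ and $\omega(g)=2$ separately via the $2$-regular complement $\bar\Gamma$ and a total-domination double count. That argument is sound and gives a bit of extra structural information (any weight-$2$ signed dominating function would force a total dominating set of size $(n-2)/2$ in a $2$-regular graph, which is impossible), but it is more work than needed: the very averaging identity $(n-2)\,\omega(g)=\sum_v g[v]$ you used for odd $n$, combined with $g[v]\geq 2$ from parity, already yields $\omega(g)\geq 2n/(n-2)>2$ and hence $\omega(g)\geq 4$ directly.
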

\begin{proof} Let $S=\{v_1,\ldots,v_{n-3}\}$ and $G \setminus S=\{e,x,y \}$. If $n$ is odd,
	then $|S|=n-3$ is even. By Theorem \ref{1},
	$\gamma_{_S}(Cay(S:G))\geq\frac{n}{|S|+1}$ and so $|V^-|\leq
	\frac{n-3}{2}$. It is sufficient to define a signed dominating function $g$ such that $V^-_g \subset S$ and $|V^-_g|=\frac{n-3}{2}$. Hence, for each $x \in G$, $g[x]=|S|+1-2|N[x]\cap V^-_g|\ge n-2-|V^-_g|=1$. Thus $$\gamma_{_S}(Cay(S:G))=\omega(g)=3$$
	Now, if $n$ is even, then $|S|$ is odd and so by
	Theorem \ref{1}, $\gamma_{_S}(Cay(S:G))\geq\frac{2n}{n-2}$. Hence,
	$|V^-|\leq \frac{n-4}{2}$ and so each signed dominating function that gives label $-1$ to exactly $\frac{n-4}{2}$ vertices  is a $\gamma_{_S}$-function. Thus $\gamma_{_S}(Cay(S:G))=4.$ \qed
\end{proof}

\begin{lemma}\label{K3} Let $G$ be a group of order $n$ and $S=G \setminus \lbrace e,a,b,c \rbrace$. Then $n$ is even and the induced subgraph on $\lbrace a,b,c \rbrace$ in $Cay(S:G)$ is $K_3$, $P_3$ or empty graph.
\end{lemma}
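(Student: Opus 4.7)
The plan is to prove the two assertions separately. For the parity, I would observe that $T := G \setminus S = \{e, a, b, c\}$ is closed under inversion (since $S = S^{-1}$ and $e^{-1} = e$), so $\{a, b, c\}$ is inverse-closed. A set of odd cardinality stable under inversion must contain a self-inverse element, and since $e \notin \{a, b, c\}$ this element has order exactly~$2$; by Lagrange's theorem, $2 \mid n$.

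For the induced-subgraph claim, among the four isomorphism classes of graphs on three vertices ($K_3$, $P_3$, $K_2 \cup K_1$, $\overline{K_3}$), I only need to exclude $K_2 \cup K_1$. Suppose for contradiction that the induced subgraph is $K_2 \cup K_1$; after relabeling within $\{a, b, c\}$ I may assume $c$ is the isolated vertex, so $a \sim b$, $a \not\sim c$, $b \not\sim c$. Rewriting non-adjacency via the connection set, $a \not\sim c$ means $ac^{-1} \in T$; since $ac^{-1} \neq e$ (as $a \neq c$) and $ac^{-1} \neq a$ (as $c \neq e$), this forces $a = bc$ or $a = c^{2}$. Symmetrically, $b = ac$ or $b = c^{2}$.

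I would then dispatch the four combinations. The case $a = c^{2} = b$ is immediate. In the mixed cases $(a = bc,\ b = c^{2})$ and $(a = c^{2},\ b = ac)$ one obtains $\{a, b, c\} = \{c, c^{2}, c^{3}\}$; inverse-closure of this set, together with $e \notin \{a, b, c\}$, rules out $c^{2} = e$ and $c^{3} = e$ and so forces $c^{4} = e$. A direct calculation then shows $ab^{-1} \in \{c, c^{3}\} \subset \{a, b, c\}$, contradicting $a \sim b$. The critical case is $(a = bc,\ b = ac)$: substituting $b = ac$ into $a = bc$ yields $c^{2} = e$, and consequently $ab^{-1} = aca^{-1}$. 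Here I split on the value of $a^{-1}$ inside $\{a, b, c\}$. The branch $a^{-1} = c$ is impossible since $c^{2} = e$ would then give $a = c$. The branch $a^{-1} = a$ gives $a^{2} = e$; then $b^{-1} = ca$ must lie in $\{a, b, c\}$, and the only consistent option is $b^{-1} = b$, which forces $ac = ca$. The branch $a^{-1} = b$ gives $a^{2} = c$, so $a$ has order $4$ with $c = a^{2}$ and $b = a^{3}$, and again $a$ commutes with $c$. In both non-vacuous branches $aca^{-1} = c \in T$, so $ab^{-1} \notin S$, contradicting $a \sim b$.

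I expect the bulk of the work to be in the final case $(a = bc,\ b = ac)$: the relation $c^{2} = e$ alone is too weak, and one must exploit the inverse-closure of $\{a, b, c\}$ delicately to force commutation of $a$ with $c$. The parity step and the other three sub-cases are short once the reformulation of non-adjacency in terms of the connection set is in place.
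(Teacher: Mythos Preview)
Your proof is correct, but it proceeds along a genuinely different line from the paper's. The paper argues \emph{constructively}: it first pins down the inversion pattern on $\{a,b,c\}$ (either all three elements are involutions, or one is an involution and the other two are mutual inverses), and then in each structural case computes directly which of $K_3$, $P_3$, or the empty graph actually occurs, depending on auxiliary data such as whether $O(b)=3$ or $O(b)=4$. Your approach is \emph{exclusionary}: you observe that among the four isomorphism types on three vertices only $K_2\cup K_1$ is forbidden, assume it occurs, and extract a contradiction from the two non-adjacency constraints $ac^{-1},\,bc^{-1}\in T$ together with inverse-closure of $\{a,b,c\}$. The paper's route yields finer information (it tells you exactly which graph you get in each case), at the price of a slightly ad hoc split on $4\mid n$ and $O(b)$; your route is more economical for the lemma as stated, since it targets only what must be ruled out, though the final sub-case $(a=bc,\ b=ac)$ still requires the delicate use of inverse-closure to force $ac=ca$ that you flagged.
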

\begin{proof}
	Since $S$ is an inverse closed subset of $G$, at least one of vertices $a,b$ or $c$ as an element of group $G$ has order two and so $n$ is even. Let $A=\lbrace a,b,c \rbrace $ and consider two following cases:
	\begin{itemize}
		\item[Case 1:]  Let $O(x)=2$ for each $x \in A$. If $a \in N(b)$, then $ab \in S$ and so $ab \neq c$. Hence, $cb \neq a$ and $ac\neq b$ and so $bc,ac \in E(Cay(S:G))$. Therefore, the induced subgraph on $A$ is $K_3$. If $a \not \in N(b) $, then $ab=c$ and so $cb=a$ and $ac=b$. Thus the induced subgraph on $A$ has no edge.
		\item[Case 2:] Let $O(a)=2$ and $c=b^{-1}$. There are two cases.  Suppose that $4 \mid n$ and $O(b)=4$. If $a = b^2$, then the induced subgraph on $A$ is empty. If $a \neq b^2$, then $c^2=b^2 \in S$. So $ba,ca,bc \in S$. Hence, the induced subgraph on $A$ is $K_3.$ If $4 \nmid n$ or $O(b) \neq 4$, then $ab,ac \not \in A \cup \lbrace e \rbrace$. So $a \in N(b) \cap N(c)$. If $O(b)=3$, then $bc \not \in E(Cay(S:G))$ and so the induced subgraph on $A$ is $P_3$. Otherwise the induced subgraph on $A$ is $K_3$. \qed
	\end{itemize}
\end{proof}

\begin{lemma}\rm{\cite{vatandoost}} \label{Kbakhsh}
	 Let $G$ be a group and $H\neq G$ be a subgroup of $G$ such that $[G:H]=t$. If $S=G \setminus H$, then $Cay(S:G)$ is a complete $t$-partite graph.
\end{lemma}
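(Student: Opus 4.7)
The plan is to exhibit the parts of the complete multipartite graph explicitly as the right cosets of $H$ in $G$, and then verify the adjacency rule of $Cay(S:G)$ matches the complete $t$-partite adjacency using these cosets.

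First I would check that $S=G\setminus H$ is a legitimate connection set. Since $e\in H$ we have $e\notin S$, and because $H$ is a subgroup it is closed under inversion, so its complement is also closed under inversion; hence $S=S^{-1}$. Thus $Cay(S:G)$ is well defined. Let $Hg_1,Hg_2,\dots,Hg_t$ be the $t$ distinct right cosets of $H$ in $G$; these partition $V(Cay(S:G))=G$ into $t$ classes, each of size $|H|$.

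Next I would translate adjacency through this partition. By the definition of the Cayley graph, $a$ and $b$ are adjacent if and only if $ab^{-1}\in S=G\setminus H$, equivalently $ab^{-1}\notin H$. The latter is exactly the statement that $Ha\neq Hb$, i.e.\ that $a$ and $b$ lie in different right cosets of $H$. So any two distinct vertices in the same right coset are non-adjacent, and any two vertices in different right cosets are adjacent, which is precisely the defining condition of a complete $t$-partite graph whose parts are the right cosets of $H$.

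There is no real obstacle here; the only point requiring any care is being consistent about left versus right cosets so that the condition $ab^{-1}\in H$ matches the coset equality $Ha=Hb$ rather than $aH=bH$. Once that bookkeeping is fixed, the conclusion $Cay(S:G)=K_{|H|,|H|,\dots,|H|}$ (with $t$ parts) is immediate.
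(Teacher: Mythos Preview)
Your argument is correct. You verify that $S=G\setminus H$ is an admissible connection set, take the right cosets of $H$ as the parts, and observe that $ab^{-1}\in H$ is equivalent to $Ha=Hb$, so that adjacency in $Cay(S:G)$ coincides exactly with membership in different parts. This is the standard proof.

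Note that the paper does not actually supply a proof of this lemma; it is quoted from \cite{vatandoost} and stated without argument. So there is no ``paper's own proof'' to compare against here, but your write-up is the natural one and would serve perfectly well as a self-contained justification. The only minor stylistic point is that you might also remark that $\langle S\rangle=G$ (so the Cayley graph is connected), which follows since $H\neq G$ forces the cosets to generate all of $G$; but this is not strictly needed for the multipartite structure itself.
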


\begin{theorem} \label{6}
	 Let $G=\langle S\rangle$ be  a finite group of order $n$ and $e \notin S=S^{-1}$ and $|S|=n-4$. Then $\gamma_{_S}(Cay(S:G))=4$.
\end{theorem}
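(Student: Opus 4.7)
The plan is to follow the two-step pattern of Theorems \ref{3} and \ref{5}: exhibit an explicit signed dominating function of weight $4$, then rule out the only smaller legal weight. Lemma \ref{K3} forces $n$ to be even, so $(n-4)/2$ is a non-negative integer and every weight $n-2|V^-_f|$ is automatically even.

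For the upper bound, I would pick any set $V^{-}\subseteq V(Cay(S:G))$ with $|V^{-}|=(n-4)/2$ and set $f=-1$ on $V^{-}$, $f=+1$ elsewhere. Because $Cay(S:G)$ is $(n-4)$-regular, $|N[v]|=n-3$ and $|N[v]\cap V^{-}|\le |V^{-}|=(n-4)/2$, so
\[ f[v]=(n-3)-2|N[v]\cap V^{-}|\ge 1 \]
for every $v$; the weight is $n-2\cdot(n-4)/2=4$, giving $\gamma_{_S}(Cay(S:G))\le 4$. Since positive even weights start at $2$, the matching lower bound reduces to excluding $\omega(f)=2$. Writing $T=G\setminus(S\cup\{e\})=\{a,b,c\}$ (which is inverse-closed because $S$ is), the three non-neighbours of a vertex $v$ other than $v$ itself are exactly $\{av,bv,cv\}$, and the inequality $f[v]\ge 1$ combined with $|V^{-}|=(n-2)/2$ forces $|V^{-}\cap\{av,bv,cv\}|\ge 1$ for every $v$; equivalently, $V^{-}$ must be a total dominating set in the complementary $3$-regular Cayley graph $Cay(T:G)$.

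The remainder of the argument I would organise along the three cases of Lemma \ref{K3}. The clean case is when $\{a,b,c\}$ induces the empty graph: the argument in that lemma shows $\{e,a,b,c\}$ is a subgroup of $G$, so Lemma \ref{Kbakhsh} identifies $Cay(S:G)$ with $K_{4,4,\ldots,4}$ on $n/4$ parts, each part being a coset of $\{e,a,b,c\}$ and hence a non-neighbour set $\{v,av,bv,cv\}$. A part containing a single $V^{-}$-vertex $w$ would violate the constraint at $v=w$, so $|V^{-}\cap P|\ge 2$ in every part $P$, giving $|V^{-}|\ge n/2>(n-2)/2$ and the desired contradiction. The main obstacle I expect is the other two subcases, where $\{a,b,c\}$ induces $K_3$ or $P_3$ and $\{e,a,b,c\}$ is not a subgroup: then $Cay(T:G)$ admits no clean block decomposition, and one must prove directly that its total dominating number exceeds $(n-2)/2$. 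I would try to leverage the explicit multiplication relations on $\{a,b,c\}$ recorded in Lemma \ref{K3} together with the vertex-transitive $G$-action on $Cay(T:G)$ to reduce to a local check at a single vertex; this case work is where the technical difficulty will concentrate.
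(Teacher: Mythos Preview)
Your upper-bound construction is correct and in fact cleaner than the paper's: you observe that \emph{any} set $V^{-}$ of size $(n-4)/2$ works, since $|N[v]\cap V^{-}|\le |V^{-}|$ already forces $f[v]\ge (n-3)-(n-4)=1$. The paper takes the same $V^{-}\subset S$ but does not spell out this pointwise inequality.

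For the lower bound the two arguments diverge sharply. The paper disposes of weight~$2$ in a single sentence: ``If $|V^{-}|=\frac{n}{2}-1$, then there is vertex $u$ such that $f[u]=-1$.'' No reason is given. You correctly recognise this as the genuine content and recast it as a total-domination statement in the complementary cubic Cayley graph $Cay(T:G)$ with $T=\{a,b,c\}$. Your handling of the ``empty'' subcase of Lemma~\ref{K3} --- where $\{e,a,b,c\}$ is a subgroup and Lemma~\ref{Kbakhsh} makes $Cay(S:G)$ complete $(n/4)$-partite --- is correct.

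Your hesitation about the $K_3$ and $P_3$ subcases, however, is more than warranted: the claimed result is actually \emph{false} there, so no local check will close the argument. Take $G=\mathbb{Z}_{10}$ and $S=\{\bar 2,\bar 3,\bar 4,\bar 6,\bar 7,\bar 8\}$, so that $|S|=n-4=6$, $S=S^{-1}$, $e\notin S$, and $\langle S\rangle=G$; here $T=\{\bar 1,\bar 5,\bar 9\}$ falls in the $K_3$ case of Lemma~\ref{K3}. With $V^{-}=\{\bar 0,\bar 3,\bar 5,\bar 8\}$ one verifies that for every $v\in G$ at least one of $v+\bar 1,\,v+\bar 5,\,v-\bar 1$ lies in $V^{-}$, hence $|N[v]\cap V^{-}|\le 3$ and $f[v]\ge 7-6=1$. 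This is a signed dominating function of weight~$2$, so $\gamma_{_S}(Cay(S:G))=2\neq 4$. The paper's one-line assertion is therefore not merely unproved but incorrect, and the obstacle you isolated in the $K_3$/$P_3$ subcases is a genuine counterexample rather than a technical gap.
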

\begin{proof}
	 Let $G=S \cup \lbrace e,a,b,c\rbrace$. Since $S$ is an inverse closed subset of $G$, at least one of elements $a,b$ or $c$ has order two, so both of $n$ and $|S|$ are even. By Theorem
	\ref{1}, $\gamma_{_S}(Cay(S:G)) \geq \frac{n}{n-3}$. So $|V^-|\leq \frac{n}{2}-1$. If
	$|V^-|=\frac{n}{2}-1$, then there is vertex $u$ such that
	$f[u]=-1$. This is contradiction. Hence, $|V^-|\leq
	\frac{n}{2}-2$. Let $S=\lbrace v_1,\ldots,v_{n-4}\rbrace$. Define
	$f:V(Cay(S:G))\rightarrow \{-1,1 \}$ such that $f(v_i)=-1$ if and only if $1 \le i \le \frac{n}{2}-2.$ Since
	$|V^-_f|=\frac{n}{2}-1$, so $f$ is a $\gamma_{_S}-$function. Therefore,
	$\gamma_{_S}(Cay(S:G))=\omega(f)=4$. \qed
\end{proof}
\section{Determining the group $G$ based on $\gamma_{_S}(Cay(S:G))$} 
Let $\Gamma_G$ be the Cayley graph $Cay(S:G)$ where $S$ is an inverse closed subset of $G  \setminus \lbrace e \rbrace$.
In this section we determine the finite group $G$ based on $\gamma_{_S}(\Gamma_G).$ In  the following there are some remarks for characterizing  all cubic Cayley graphs where $n \in \{8,10,12\}.$

\begin{remark} Let $G$ be a group of order $n$. By Lemma 1 of \rm{\cite{7}}, $\gamma_{_S}(\Gamma_G)=n$ if and only if $G=\mathbb{Z}_2$.
	Also for graph $\Gamma$ of order  $n$, $\gamma_{_S}(\Gamma)\neq n-t$, where $t$ is odd.
\end{remark}

\begin{remark} \label{D8} 
	Let $\Gamma_1$ and $\Gamma_2$ be graphs in Figure \ref{fig}. If $|S|=3$ and $O(s)=2$ for every $s \in S$, then $Cay(S:D_8)\simeq\Gamma_1$ or $\Gamma_2$ and $Cay(S:\mathbb{Z}_2 \times \mathbb{Z}_2 \times \mathbb{Z}_2)\simeq\Gamma_2$. 
\end{remark}

\begin{remark}\label{D8Z2*Z4} 
	The Cayley  graph of groups  $D_8$ and $\mathbb{Z}_2 \times \mathbb{Z}_4$ with $S=\{s_1,s_2,s_1^{-1}\}$, is isomorphic to $\Gamma_1$  and $Cay\left(\{s_1,s_2,s_1^{-1}\}:\mathbb{Z}_8 \right) \simeq \Gamma_2$ in Figure \ref{fig}.
\end{remark}

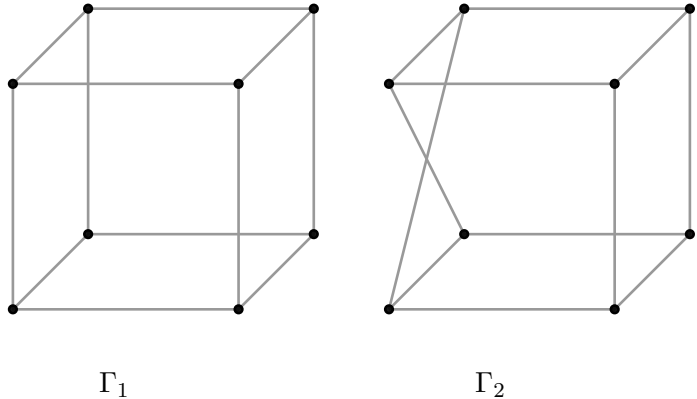
\begin{figure}[h]
	\centering
	\begin{tikzpicture}
	[line width=1pt,scale=1]
	\draw[black!40](0,2)--(3,2);
	\draw[black!40](2,1)--(3,2);
	\draw[black!40](2,1)--(-1,1);
	\draw[black!40](0,2)--(-1,1);
	\draw[black!40](3,2)--(3,-1)--(0,-1);
	\draw[black!40](2,1)--(2,-2)--(3,-1);
	\draw[black!40](-1,1)--(-1,-2)--(2,-2);
	\draw[black!40](0,2)--(0,-1)--(-1,-2);

	\draw[black!40](5,2)--(8,2);
	\draw[black!40](7,1)--(8,2);
	\draw[black!40](7,1)--(4,1);
	\draw[black!40](5,2)--(4,1);
	\draw[black!40](8,2)--(8,-1)--(5,-1);
	\draw[black!40](7,1)--(7,-2)--(8,-1);
	\draw[black!40](4,-2)--(7,-2);
	\draw[black!40](5,2)--(4,-2);
	\draw[black!40](4,1)--(5,-1)--(4,-2);
	
	\filldraw[fill opacity=0.9,fill=black]  (5,-1) circle (0.05cm);
	\filldraw[fill opacity=0.9,fill=black]  (4,-2) circle (0.05cm);
	\filldraw[fill opacity=0.9,fill=black]  (7,-2) circle (0.05cm);
	\filldraw[fill opacity=0.9,fill=black]  (7,1) circle (0.05cm);
	\filldraw[fill opacity=0.9,fill=black]  (5,2) circle (0.05cm);
	\filldraw[fill opacity=0.9,fill=black]  (8,2) circle (0.05cm);
	\filldraw[fill opacity=0.9,fill=black]  (8,-1) circle (0.05cm);
	\filldraw[fill opacity=0.9,fill=black]  (4,1) circle (0.05cm);
	
	\filldraw[fill opacity=0.9,fill=black]  (0,-1) circle (0.05cm);
	\filldraw[fill opacity=0.9,fill=black]  (-1,-2) circle (0.05cm);
	\filldraw[fill opacity=0.9,fill=black]  (2,-2) circle (0.05cm);
	\filldraw[fill opacity=0.9,fill=black]  (2,1) circle (0.05cm);
	\filldraw[fill opacity=0.9,fill=black]  (0,2) circle (0.05cm);
	\filldraw[fill opacity=0.9,fill=black]  (3,2) circle (0.05cm);
	\filldraw[fill opacity=0.9,fill=black]  (3,-1) circle (0.05cm);
	\filldraw[fill opacity=0.9,fill=black]  (-1,1) circle (0.05cm);
	
	\coordinate [label=right:{$\Gamma_1$}] (g) at (0,-3);
	\coordinate [label=right:{$\Gamma_2$}] (g) at (5,-3);
	\end{tikzpicture}
	\caption{The cubic Cayley graphs of order $8.$} %Notice that $diam(\Gamma_1)=2$ and $diam(\Gamma_2)=3.$}
	\label{fig}
\end{figure}
\begin{remark}\label{D10Z10} Any cubic Cayley graphs of order $10$ is isomorphic to one of the graphs in Figure \ref{FigureCay(S:D_10)}.
\end{remark}

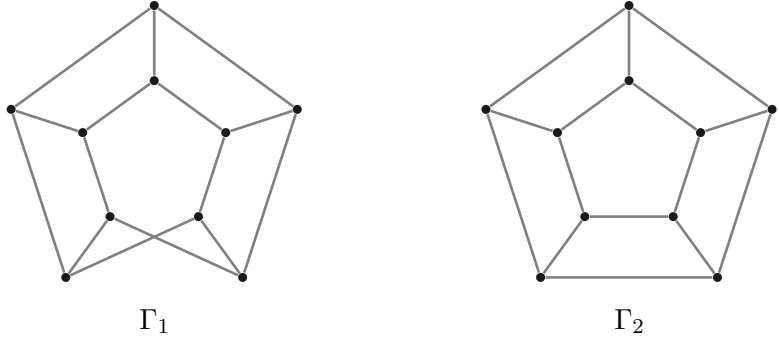
\begin{figure}[h]
	\centering
	\begin{tabular}{cccccc}
		\begin{tikzpicture}[
		vertex_style/.style={circle,inner sep=0pt,minimum size=0.12cm, fill opacity=0.9, fill=black},
		edge_style/.style={line width=1pt, gray}]
		
		%\useasboundingbox (-5.05,-4.4) rectangle (5.1,5.25);
		
		\begin{scope}[rotate=90]
		\foreach \x/\y in {0/1,72/2,144/3,216/4,288/5}{
			\node[vertex_style] (\y) at (canvas polar cs: radius=1cm,angle=\x){};
		}
		\foreach \x/\y in {0/6,72/7,144/8,216/9,288/10}{
			\node[vertex_style] (\y) at (canvas polar cs: radius=2cm,angle=\x){};
		}
		\end{scope}
		
		\foreach \x/\y in {1/6,2/7,3/8,4/9,5/10}{
			\draw[edge_style] (\x) -- (\y);
		}
		
		\foreach \x/\y in {1/2,2/3,4/5,5/1,4/8}{
			\draw[edge_style] (\x) -- (\y);
		}
		
		\foreach \x/\y in {6/7,7/8,3/9,9/10,10/6}{
			\draw[edge_style] (\x) -- (\y);
		}
		\coordinate [label=right:{$\Gamma_1$}] (h) at (-0.33,-2.2);
		
		\end{tikzpicture}  & & \hspace*{0.7cm} & &
		
		\begin{tikzpicture}[
		vertex_style/.style={circle,inner sep=0pt,minimum size=0.12cm, fill opacity=0.9, fill=black},
		edge_style/.style={line width=1pt, gray}]
		
		%\useasboundingbox (-5.05,-4.4) rectangle (5.1,5.25);
		
		\begin{scope}[rotate=90]
		\foreach \x/\y in {0/1,72/2,144/3,216/4,288/5}{
			\node[vertex_style] (\y) at (canvas polar cs: radius=1cm,angle=\x){};
		}
		\foreach \x/\y in {0/6,72/7,144/8,216/9,288/10}{
			\node[vertex_style] (\y) at (canvas polar cs: radius=2cm,angle=\x){};
		}
		\end{scope}
		
		\foreach \x/\y in {1/6,2/7,3/8,4/9,5/10}{
			\draw[edge_style] (\x) -- (\y);
		}
		
		\foreach \x/\y in {1/2,2/3,3/4,4/5,5/1}{
			\draw[edge_style] (\x) -- (\y);
		}
		
		\foreach \x/\y in {6/7,7/8,8/9,9/10,10/6}{
			\draw[edge_style] (\x) -- (\y);
		}
		\coordinate [label=right:{$\Gamma_2$}] (h) at (-0.33,-2.2);
		\end{tikzpicture}
		
	\end{tabular}
	\caption{The cubic Cayley graphs of order $10.$}
	\label{FigureCay(S:D_10)}
\end{figure}
\begin{remark}\label{D12,Z12}
	Any cubic Cayley graph of groups $D_{12},\mathbb{Z}_{12}$ and $\mathbb{Z}_2  \times \mathbb{Z}_6$ is isomorphic to one of the graphs in Figure \ref{D12Z12}. Moreover let $S=\{s_1,s_2,s_1^{-1}\}.$ If $G\simeq D_{12} , \mathbb{Z}_2 \times \mathbb{Z}_6$, then $Cay(S:G)\simeq \Gamma_2$, otherwise  $Cay(S:\mathbb{Z}_{12})\simeq \Gamma_1.$
\end{remark}
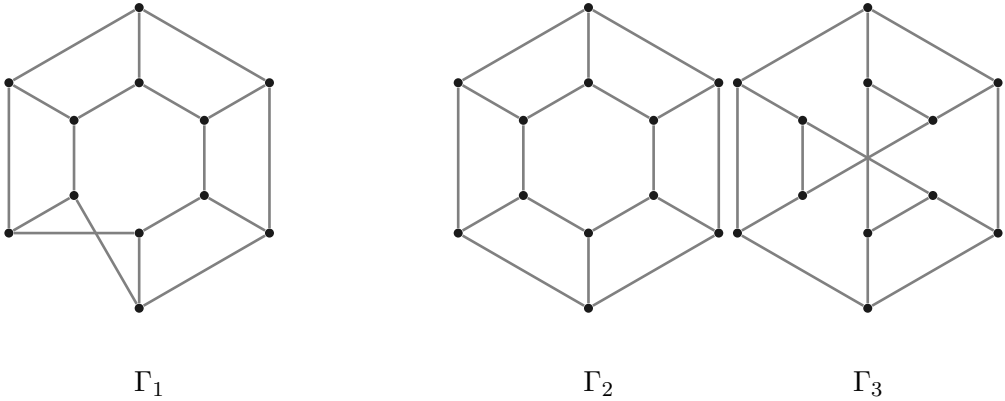
\begin{figure}[h]
	\centering
	\begin{tabular}{cccccc}

		\begin{tikzpicture}[
		vertex_style/.style={circle,inner sep=0pt,minimum size=0.12cm, fill opacity=0.9, fill=black},
		edge_style/.style={line width=1pt, gray}]
		
		%\useasboundingbox (-5.05,-4.4) rectangle (5.1,5.25);
		
		\begin{scope}[rotate=90]
		\foreach \x/\y in {0/1,60/2,120/3,180/4,240/5,300/6}{
			\node[vertex_style] (\y) at (canvas polar cs: radius=1cm,angle=\x){};
		}
		\foreach \x/\y in {0/7,60/8,120/9,180/10,240/11,300/12}{
			\node[vertex_style] (\y) at (canvas polar cs: radius=2cm,angle=\x){};
		}
		\end{scope}
		
		\foreach \x/\y in {1/7,2/8,3/9,4/10,5/11,12/6}{
			\draw[edge_style] (\x) -- (\y);
		}
		
		\foreach \x/\y in {1/2,2/3,3/10,4/5,5/6,6/1}{
			\draw[edge_style] (\x) -- (\y);
		}
		
		\foreach \x/\y in {11/12,7/8,8/9,9/4,10/11,12/7}{
			\draw[edge_style] (\x) -- (\y);
		}
		\coordinate [label=right:{$\Gamma_1$}] (h) at (-0.2,-3);
		\end{tikzpicture} & & \hspace*{0.7cm} & & 
		
		\begin{tikzpicture}[
		vertex_style/.style={circle,inner sep=0pt,minimum size=0.12cm, fill opacity=0.9, fill=black},
		edge_style/.style={line width=1pt, gray}]
		
		%\useasboundingbox (-5.05,-4.4) rectangle (5.1,5.25);
		
		\begin{scope}[rotate=90]
		\foreach \x/\y in {0/1,60/2,120/3,180/4,240/5,300/6}{
			\node[vertex_style] (\y) at (canvas polar cs: radius=1cm,angle=\x){};
		}
		\foreach \x/\y in {0/7,60/8,120/9,180/10,240/11,300/12}{
			\node[vertex_style] (\y) at (canvas polar cs: radius=2cm,angle=\x){};
		}
		\end{scope}
		
		\foreach \x/\y in {1/7,2/8,3/9,4/10,5/11,12/6}{
			\draw[edge_style] (\x) -- (\y);
		}
		
		\foreach \x/\y in {2/3,2/1,3/4,5/4,6/5,6/1}{
			\draw[edge_style] (\x) -- (\y);
		}
		
		\foreach \x/\y in {11/12,7/8,8/9,9/10,10/11,12/7}{
			\draw[edge_style] (\x) -- (\y);
		}
		\coordinate [label=right:{$\Gamma_2$}] (h) at (-0.2,-3);
		\end{tikzpicture}
		% &  & \hspace*{0.7cm} & & 
		\begin{tikzpicture}[
		vertex_style/.style={circle,inner sep=0pt,minimum size=0.12cm, fill opacity=0.9, fill=black},
		edge_style/.style={line width=1pt, gray}]
		
		%\useasboundingbox (-5.05,-4.4) rectangle (5.1,5.25);
		
		\begin{scope}[rotate=90]
		\foreach \x/\y in {0/1,60/2,120/3,180/4,240/5,300/6}{
			\node[vertex_style] (\y) at (canvas polar cs: radius=1cm,angle=\x){};
		}
		\foreach \x/\y in {0/7,60/8,120/9,180/10,240/11,300/12}{
			\node[vertex_style] (\y) at (canvas polar cs: radius=2cm,angle=\x){};
		}
		\end{scope}
		
		\foreach \x/\y in {1/7,2/8,3/9,4/10,5/11,12/6}{
			\draw[edge_style] (\x) -- (\y);
		}
		
		\foreach \x/\y in {2/3,2/5,3/6,1/4,4/5,6/1}{
			\draw[edge_style] (\x) -- (\y);
		}
		
		\foreach \x/\y in {11/12,7/8,8/9,9/10,10/11,12/7}{
			\draw[edge_style] (\x) -- (\y);
		}
		\coordinate [label=right:{$\Gamma_3$}] (h) at (-0.33,-3);
		
		\end{tikzpicture}
	\end{tabular}
	\caption{The cubic Cayley graphs where $G \in \{ D_{12}\, , \, \mathbb{Z}_{12}\, , \, \mathbb{Z}_2 \times \mathbb{Z}_6 \}.$}
	\label{D12Z12}
\end{figure}

\begin{theorem} \label{Cn}\rm{\cite{Haynes}} 
	 For $n \geq 3$, $\gamma_{_S}(C_n)=n-2\lfloor \frac{n}{3} \rfloor $.
\end{theorem}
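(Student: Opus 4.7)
The plan is to exploit the fact that $C_n$ is $2$-regular, so every closed neighborhood $N[v]$ contains exactly three vertices. Since $g$ takes values in $\{-1,1\}$, the sum $g[v]$ is an odd integer, so $g[v]>0$ is equivalent to $g[v]\ge 1$, which holds if and only if at most one of the three vertices in $N[v]$ is labeled $-1$. I would first translate this local condition into a global combinatorial one: writing the vertices around the cycle as $v_0,v_1,\ldots,v_{n-1}$, the function $g$ is a signed dominating function if and only if every three consecutive vertices contain at most one element of $V^-_g$, equivalently, any two vertices of $V^-_g$ have cyclic distance at least $3$.

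Next, since $|V(C_n)|=n$, the weight satisfies $\omega(g)=n-2|V^-_g|$, so minimizing $\omega(g)$ is the same as maximizing the cardinality of a subset $V^-_g\subseteq V(C_n)$ in which every pair of vertices has cyclic distance $\ge 3$. For the upper bound on $|V^-_g|$, I would list its elements cyclically and consider the gaps $g_1,\ldots,g_m$ between consecutive $-1$ vertices around the cycle; each gap is $\ge 3$ and $\sum_{i=1}^m g_i=n$, forcing $m\le\lfloor n/3\rfloor$. Hence $\omega(g)\ge n-2\lfloor n/3\rfloor$ for every signed dominating function $g$.

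For the matching construction, I would place $-1$'s precisely at the vertices $v_{3i}$ for $0\le i\le \lfloor n/3\rfloor-1$ and $+1$'s elsewhere. A direct check, handled by the three residues $n\equiv 0,1,2\pmod 3$, verifies that consecutive $-1$'s are at cyclic distance at least $3$ (the last ``wrap-around'' gap is $3$, $4$, or $5$ respectively), so by the equivalence established above this $g$ is a signed dominating function with $|V^-_g|=\lfloor n/3\rfloor$, giving weight exactly $n-2\lfloor n/3\rfloor$.

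Combining the two bounds yields $\gamma_{_S}(C_n)=n-2\lfloor n/3\rfloor$. The only real subtlety is the wrap-around in the construction for small $n$ and for $n\equiv 0\pmod 3$, where the first and last placed $-1$'s must still be at distance at least $3$; this is the one step I would treat case-by-case to be safe, but otherwise the argument is a clean packing calculation.
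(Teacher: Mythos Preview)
The paper does not supply a proof of this theorem; it merely quotes the result from \cite{Haynes}. Your argument is correct and self-contained: the reduction to ``no two $-1$'s at cyclic distance less than $3$'' is exactly the right reformulation for a $2$-regular graph, the gap-counting inequality $\sum g_i=n$ with each $g_i\ge 3$ cleanly gives $|V^-_g|\le\lfloor n/3\rfloor$, and your construction with $-1$'s at $v_0,v_3,\ldots,v_{3(\lfloor n/3\rfloor-1)}$ achieves equality since the wrap-around gap is $n-3\lfloor n/3\rfloor+3\in\{3,4,5\}$. There is nothing to compare against in the paper itself, but your proof would serve as a complete replacement for the citation.
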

\begin{theorem}\label{d3} \rm{\cite{9}} 
	Let $\Gamma$ be a graph with $\Delta \leq 3$, $g$ be a signed dominating function of $\Gamma$ and $u,v \in V(\Gamma)$. If $g(u)=g(v)=-1$, then $d(u,v) \geq 3$.
\end{theorem}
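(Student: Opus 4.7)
The plan is to argue by contradiction. Suppose there exist distinct vertices $u,v$ with $g(u)=g(v)=-1$ and $d(u,v)\le 2$, and derive a violation of the signed domination inequality $g[x]>0$. Since $g$ is integer-valued, $g[x]>0$ is equivalent to $g[x]\ge 1$. The crucial quantitative input is $\Delta \le 3$, which gives $|N[x]|\le 4$ for every vertex $x$. Consequently, if two vertices of $N[x]$ already carry the value $-1$, the remaining at most two summands can contribute at most $+2$, so $g[x]\le -1-1+1+1 = 0$, which already contradicts $g[x]\ge 1$.

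The argument then splits into two cases according to $d(u,v)$. If $d(u,v)=1$, then $u$ and $v$ are adjacent and $\{u,v\}\subseteq N[u]$; applying the bound above to $x=u$ yields $g[u]\le 0$, a contradiction. If $d(u,v)=2$, pick a common neighbor $w$ of $u$ and $v$; then $\{u,v\}\subseteq N[w]$ with $g(u)=g(v)=-1$, and the same counting applied at $x=w$ gives $g[w]\le 0$, again a contradiction. Therefore $d(u,v)\ge 3$.

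The only point requiring a moment of care is the subcase where $\deg(u)$ or $\deg(w)$ is strictly less than $3$: in that event $|N[\cdot]|$ is even smaller and the inequality $g[\cdot]\le 0$ only becomes stronger, so the contradiction persists. There is no substantive obstacle beyond this counting, which is why the result is invoked as a lemma rather than developed further here.
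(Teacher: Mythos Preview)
Your argument is correct: the contradiction via the count $|N[x]|\le 4$ at $x=u$ (when $d(u,v)=1$) or at a common neighbor $x=w$ (when $d(u,v)=2$) is exactly the right mechanism, and your remark that smaller degree only sharpens the inequality covers all subcases. Note, however, that the paper does not supply its own proof of this statement; Theorem~\ref{d3} is quoted from \cite{9} and used as a black box, so there is no in-paper argument to compare against.
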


\begin{theorem}\label{n-2}
	 Let $G$ be a group of order $n.$ If $\gamma_{_S}(\Gamma_G)=n-2$, then $G\simeq \mathbb{Z}_3,\mathbb{Z}_2 \times \mathbb{Z}_2,\mathbb{Z}_4,\mathbb{Z}_5,S_3,\mathbb{Z}_6, \mathbb{Z}_8$ or $D_8$.
\end{theorem}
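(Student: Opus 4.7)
My plan starts from the identity $\omega(g) = n - 2|V_g^-|$, which turns the condition $\gamma_S(\Gamma_G) = n - 2$ into two equivalent requirements: (A) some signed dominating function $f$ has $|V_f^-| = 1$, and (B) no signed dominating function has $|V_f^-| \ge 2$. Condition (A) is equivalent to $|S| \ge 2$ (since labelling one vertex by $-1$ gives an SDF iff every closed-neighborhood sum is $\ge 1$). For an upper bound on $|S|$, I would observe that if $|S| \ge 4$ then for any pair $\{u,w\}$ the function assigning $-1$ to $u, w$ and $+1$ elsewhere satisfies $f[v] \ge |S| - 3 \ge 1$ at every vertex (the worst case being when $v$ is adjacent to both of $u, w$, or $v \in \{u,w\}$ with $u \sim w$), which contradicts (B). Hence $|S| \in \{2, 3\}$.

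For the case $|S| = 2$, the Cayley graph $\Gamma_G$ is a cycle $C_n$, so Theorem \ref{Cn} yields $\gamma_S(C_n) = n - 2$ iff $\lfloor n/3 \rfloor = 1$, giving $n \in \{3, 4, 5\}$ and hence $G \in \{\mathbb{Z}_3, \mathbb{Z}_4, \mathbb{Z}_2 \times \mathbb{Z}_2, \mathbb{Z}_5\}$.

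For the case $|S| = 3$, Theorem \ref{d3} asserts that any two $-1$-labelled vertices of an SDF lie at distance $\ge 3$; conversely, if $d(u,w) \ge 3$ in a cubic graph then $V^- = \{u,w\}$ is an SDF (each closed-neighborhood sum is $\ge 2$). So (B) collapses to $\mathrm{diam}(\Gamma_G) \le 2$. The Moore bound for cubic diameter-$2$ graphs then forces $n \le 1 + 3 + 6 = 10$, and $S = S^{-1}$ with $|S|$ odd forces an involution in $S$, so $n$ is even; thus $n \in \{4, 6, 8, 10\}$.

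The main obstacle is the enumeration, for each such $n$, of the groups whose cubic Cayley graph has diameter $\le 2$. For $n = 4$ the graph is $K_4$, yielding $G \in \{\mathbb{Z}_4, \mathbb{Z}_2 \times \mathbb{Z}_2\}$; for $n = 6$ the two cubic Cayley graphs $K_{3,3}$ and the $3$-prism both have diameter $2$, yielding $G \in \{\mathbb{Z}_6, S_3\}$. The delicate step is $n = 8$: Remarks \ref{D8} and \ref{D8Z2*Z4} classify every cubic Cayley graph on $8$ vertices as either the cube $Q_3$ (diameter $3$) or the M\"obius ladder $M_8$ (diameter $2$). Only $M_8$ qualifies, and the remarks imply that it is realized exactly by $\mathbb{Z}_8$ and $D_8$, while $\mathbb{Z}_2 \times \mathbb{Z}_2 \times \mathbb{Z}_2$ and $\mathbb{Z}_2 \times \mathbb{Z}_4$ realize only $Q_3$, and $Q_8$ has no generating cubic $S$ at all. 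Finally, for $n = 10$, Remark \ref{D10Z10} exhibits both cubic Cayley graphs, and a direct diameter computation shows each has diameter $\ge 3$ (the only diameter-$2$ candidate, the Petersen graph, is not a Cayley graph), so no group of order $10$ contributes. Combining all the cases yields the stated list.
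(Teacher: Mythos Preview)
Your argument is correct and is considerably more streamlined than the paper's. Both proofs begin from $|V^-|=1$, handle $|S|=2$ via Theorem~\ref{Cn}, and invoke Theorem~\ref{d3} to force $\mathrm{diam}(\Gamma_G)\le 2$ when $|S|=3$. The paper, however, eliminates $|S|\ge 4$ by a somewhat ad hoc argument about non-adjacent generators and an extra vertex outside $S\cup\{e\}$, whereas your two-line count $f[v]\ge (|S|+1)-4\ge 1$ for any pair $\{u,w\}$ is cleaner and fully rigorous. More significantly, for $|S|=3$ the paper never states the Moore bound; instead it carries out a lengthy case analysis on the intersection pattern of $N(s_1),N(s_2),N(s_3)$ (five subcases in each of two cases, according to whether all $s_i$ are involutions), reconstructing the possible graphs and orders one by one. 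Your route---observe that diameter $\le 2$ in a cubic graph forces $n\le 10$, then check the finitely many groups---bypasses all of that structure analysis, and the $n=10$ case falls out immediately from the uniqueness of the Petersen graph as the cubic Moore graph. One small caution: Remark~\ref{D8} as printed asserts $Cay(S:\mathbb{Z}_2^3)\simeq\Gamma_2$, which conflicts with your (correct) identification of that Cayley graph as the cube $Q_3=\Gamma_1$; you are right on the mathematics, but if you cite the remark you should note the discrepancy rather than silently correct it.
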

\begin{proof}
	Since $\gamma_{_S}(\Gamma_G)=n-2$, so $|V^-|=1$. Let $f$ be a $\gamma_{_S}$-function. Since Cayley graph is a vertex transitive graph, we can suppose that $f(e)=-1$ and $f(x)=1$ for all $x \in G\setminus \{e\}$. If $|S|=2$, then $\Gamma$ is a cycle. By Theorem \ref {Cn}, $n \in \{3,4,5\}$. So $G \simeq \mathbb{Z}_3,\mathbb{Z}_2 \times \mathbb{Z}_2,\mathbb{Z}_4,\mathbb{Z}_5.$\\
	If $|S|\geq 4$, then $n\geq 5$. If the induced subgraph on $S$ is
	$K_{n-1}$, then $\Gamma_G\simeq K_n$. Thus $\gamma_{_S}(K_n)=1$ or $2$ and so
	$n=3$ or $4$, respectively, which is impossible. So the induced subgraph on $S$ is not a complete graph and so there are $s,s'\in S$ such that $ss' \not\in E(\Gamma_G)$. Since $\Gamma_G$ is $|S|$-regular, there is $t \in N(S)\setminus (S\cup \{e\}).$ Also the lable of $t$ can be $-1$ (because $|S| \ge 4$). Thus $|V^-_f|\ge 2$. This is contradiction by $\gamma_{_S}(\Gamma_G)=n-2$. Therefore, $|S|=3$ and $n$ is even. Since $\gamma_{_S}(\Gamma_G)=n-2$, by Theorem \ref{d3}, $diam(\Gamma_G) \leq 2$. Let $S=\{s_1,s_2,s_3 \}$ and consider two general cases:
	\begin{itemize}
		\item[Case 1:] Let $O(s_i)=2$ for every $1 \leq i \leq 3$. Suppose that the induces subgraph on $S$ has the edge $s_1s_2$. Hence, $s_1s_2=s_3$ and so $s_1s_3=s_2 $ and $s_3s_2=s_1$. Thus  $\langle S \rangle \simeq K_3$, so $\Gamma_G\simeq K_4$ and so $G\simeq \mathbb{Z}_2 \times \mathbb{Z}_2$.
		If $\langle S \rangle$ is isomorphic to an empty graph, then $diam(\Gamma_G)=2$ and also $N(s_i)=\{e,x_{i},y_{i} \}$ for $1\leq i \leq 3.$ If $N(s_1)=N(s_2)=N(s_3)$, then
		$|V(\Gamma_G)|=6$. Since $\mathbb{Z}_6$ dose not have three elements of order two, $G \not\simeq \mathbb{Z}_6$. Let $G\simeq S_3$ and $S=\{(1\, 2),(1\, 3),(2\, 3) \}$. Then $\gamma_{_S}(Cay(S:S_3))=n-2.$ Now let $N(s_1)=N(s_2)$ and $N(s_1)\cap N(s_3)=\lbrace e \rbrace$.
		Let $x_{3} \not \in N(y_{3})$. So there is a vertex $b$ such that
		$b \in N(x_{3}) \setminus S \cup N(S)$. So $f(b)$ can be $-1$.
		This is impossible. Hence, $x_{3}y_{3} \in E(\Gamma_G)$ and so
		$n=8$. Since $G=\langle S \rangle$ and $O(s_i)=2$, $G\not\simeq \mathbb{Z}_8, Q_8$ and $\mathbb{Z}_2 \times \mathbb{Z}_4$. Also $\Gamma_G \not\simeq  \Gamma_1 , \Gamma_2$ in Remark \ref{D8},  and so $G\not \in \lbrace D_8,\mathbb{Z}_2 \times \mathbb{Z}_2 \times \mathbb{Z}_2 \rbrace.$ Since $\Gamma_G$ is a cubic graph and $ diam(\Gamma_G)=2$, so  it is impossible that $N(s_1)=N(s_2)$ and $N(s_1)\cap N(s_3)=\{ e,x_1
		\}$. If $N(s_i)\neq N(s_j)$ for $1\leq i,j \leq 3$,
		then there are five cases:\\
		\begin{itemize}
			\item[(i)] If $N(s_1)\cap N(s_2)=\{ e,x \}$ and $N(s_2)\cap N(s_3)=\{ e,y \}$, then there are $z \in N(s_1)$ and $w \in N(s_3)$.
			Since $diam(\Gamma_G)=2$, so $n=8$. By Remark \ref{D8}, $\Gamma_G\simeq\Gamma_1$ and so $G\simeq D_8.$
			\item[(ii)] If $N(s_1)\cap N(s_2)\cap N(s_3)=\{ e,x\}$, then $N(s_i)=\{e,x,y_i \}$ for $1 \leq i \leq 3$. So $y_is_i \neq s_i$ and so $y_1s_1=s_2$, $y_2s_2=s_3$ and $y_3s_3=s_1$. Thus $y_1=s_2s_1$, $y_2=s_3s_2$ and $y_3=s_1s_3$. Also $xs_1=s_3$, $xs_2=s_1$ and $xs_3=s_2$. Hence, $s_3s_1=s_1s_2=s_2s_3$. On the other hand, since $diam(\Gamma_G)=2$, so $n=8$. Thus $\langle S\rangle \simeq  K_3$. But we have $y_3y_2^{-1}=s_1s_3(s_3s_2)^{-1}=e \not\in S$. This is impossible.
			\item[(iii)] Let $N(s_i)\cap N(s_j)=\lbrace e \rbrace$ for
			$i \neq j$. Since $\gamma_{_S}(\Gamma_G)=n-2$, the induced
			subgraph on $N(S)\setminus\{e \}$ is $ 2-$regular graph. So
			$n=10$. But group $\mathbb{Z}_{10}$ does not have three elements of order
			two. Let $G\simeq D_{10}=\langle \{a,b: a^2=b^5=(ab)^2=e    \}\rangle.$  We know that $\{a,ab,ab^2,ab^3,ab^4 \}$ are all elements of $D_{10}$ of order two. So $N(S)\setminus \{e\}=\{ b^j: 1 \le j \le 4\}.$ Hence, $Cay(S:D_{10}) \not\simeq \Gamma_G$ and so $G\not\simeq D_{10}$.
			\item[(iv)] If $N(s_1)\cap N(s_2)=\{e,x \}$ and $N(s_1)\cap N(s_3)= N(s_2)\cap N(s_3)=\lbrace e \rbrace$. Since $diam(\Gamma_G)=2$, $n=9$. This is impossible to have a cubic graph of order $9$.
			\item[(v)] If $N(s_1)\cap N(s_2)=\{e,x \}$, $N(s_1)\cap
			N(s_3)=\{e,y \}$ and $N(s_2)\cap N(s_3)=\{e,z\}$. Since $\Gamma_G$ is a
			cubic graph, so there exists $t \in G \setminus N[S]$ and so $d(e,t)=3$. This is contradiction by $diam(\Gamma_G)=2.$
		\end{itemize}
		\item[Case 2:] Let $s_1=s^{-1}_3$ and $O(s_2)=2.$ If $s_1s_2 \in E(\Gamma_G)$, then $s_1s_2=s_3$.
		So $s^2_1=s_2$. Thus $O(s_1)=4$. Hence, $G\simeq \mathbb{Z}_4$. The similar argument applies when $s_3s_2 \in E(\Gamma_G)$.  Let $s_1,s_3 \not \in N(s_2).$ Then $N(s_2)=\{ e,x_2,y_2\}$.  If $O(s_1)=3$, then $s_3=s^2_1$ and $s_1 \in N(s_3)$. If $\langle \{ s_1,s_3,x_2,y_3\}\rangle$, then $n=6$. Let $G\simeq \mathbb{Z}_6$ and $S=\{a^2,a^3,a^4 \}$ or $G\simeq S_3$ and $S=\{(1\, 2),(1\, 2\, 3),(1\, 3\, 2) \}$. Then the result holds.\\
		If  $x_i \in N(s_i)\setminus N(s_2)$ for $i=1,3$, then $x_1x_2,x_3y_2 \in E(\Gamma_G)$. So $|G|=8$.
		Since $O(s_1)=3$, this is impossible and if $x_1 \in N(s_1)\setminus N(s_2)$ and $s_3y_2 \in E(\Gamma_G)$, then this graph is not cubic.
		Now, let $O(s_1)\neq 3$. then  $\langle S \rangle$ is an empty graph. The argument is likewise
		Case 1 when $\langle S \rangle \simeq \emptyset$. If $N(s_1)=N(s_2)=N(s_3)$, then $n=6$. For $G\simeq \mathbb{Z}_6$ and $S=\{ \bar{1}, \bar{3}, \bar{5}\}$ the result holds. Suppose that $N(s_1)=N(s_2)$, $N(s_1)\cap N(s_3)=\{e\}$. Hence, $n=8.$ But all elements of $\mathbb{Z}_2 \times \mathbb{Z}_2 \times \mathbb{Z}_2$ have order two. Also $Q_8$ does not have any generator of order $3$ and for each generator of $\mathbb{Z}_8 , D_8$ or $ \mathbb{Z}_2 \times \mathbb{Z}_4$ of cardinality $3$, $Cay(S:G) \not\simeq \Gamma_G$. 
		
		Likewise Case 1, we can see $N(s_1)\cap N(s_3)=\{ e, x_1\}$ and $N(s_1)=N(s_2)$ and Cases (iv) and (v) are not happened. To complete the  proof it is sufficient to consider Case (i) to (iii):
		\begin{itemize}
			\item[(i)] Let $i \neq j$ and $N(s_i)\neq N(s_j)$. Then $Cay(\{a,a^3,b\},D_8)$ is isomorphic to $\Gamma_G$.
			\item[(ii)] If $N(s_1) \cap N(s_2)\cap N(s_3)=\{e,x\}$, then $xs_1 \in \{s_2,s_1^{-1}\}$, $xs_2 \in \{s_1,s_1^{-1}\}$ and $xs_1^{-1}\in \{s_1,s_2\}$. It is not hard to see that all of them are impossible.
			\item[(iii)] Let $N(s_i)\cap N(s_j)=\lbrace e \rbrace$ for $i \neq j$. Then $G \in \{\mathbb{Z}_{10}, D_{10}\}$. But $Cay(S:G) \not\simeq \Gamma_G$ when $|S|=3$.
			\qed
		\end{itemize}
	\end{itemize}
\end{proof}

\begin{theorem} Let $G\simeq \langle S \rangle$ be a group of order $n$ and $|S|=2$. Then
	$\gamma_{_S}(Cay(S:G))=n-4$ if and only if $G\simeq \mathbb{Z}_6,\mathbb{Z}_7,\mathbb{Z}_8,S_3$ and $D_8$.
\end{theorem}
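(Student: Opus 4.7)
The plan is to reduce the problem to cycles. Since $S=S^{-1}$ has cardinality $2$ and generates $G$, the Cayley graph $Cay(S:G)$ is connected and $2$-regular, hence isomorphic to the cycle $C_n$. Applying Theorem~\ref{Cn}, the hypothesis $\gamma_{_S}(Cay(S:G))=n-4$ becomes $n-2\lfloor n/3\rfloor=n-4$, i.e.\ $\lfloor n/3\rfloor=2$, forcing $n\in\{6,7,8\}$. Thus the forward direction reduces to listing all groups of these three orders that admit an inverse-closed generating set of size $2$.

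Because $|S|=2$ with $S=S^{-1}$, the set $S$ is either of the form $\{s,s^{-1}\}$ for some $s$ of order at least $3$, or it consists of two distinct involutions $\{s_1,s_2\}$. For the converse direction I exhibit, for each group in the list, a concrete valid $S$: namely $\{1,5\}$ in $\mathbb{Z}_6$, $\{(1\,2),(2\,3)\}$ in $S_3$ (two transpositions whose product is $(1\,2\,3)$), $\{1,6\}$ in $\mathbb{Z}_7$, $\{1,7\}$ in $\mathbb{Z}_8$, and $\{s,rs\}$ in $D_8=\langle r,s\mid r^4=s^2=e,\ srs=r^{-1}\rangle$ (since $s\cdot rs=r^{-1}$, this set generates $D_8$). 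In every case $\langle S\rangle=G$, hence $Cay(S:G)\cong C_n$ and $\gamma_{_S}(Cay(S:G))=n-4$ by Theorem~\ref{Cn}.

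For the forward direction I still have to rule out the remaining groups of orders $6,7,8$; orders $6$ and $7$ contribute no further groups, so only order $8$ requires work. The candidates to eliminate are $Q_8$, $\mathbb{Z}_2\times\mathbb{Z}_4$, and $\mathbb{Z}_2\times\mathbb{Z}_2\times\mathbb{Z}_2$. In $Q_8$ the unique involution is $-1$, killing the two-involutions option, while every element of order $4$ generates a cyclic subgroup of order $4$. In $\mathbb{Z}_2\times\mathbb{Z}_2\times\mathbb{Z}_2$ every non-identity element is an involution, and any two of them generate a Klein four subgroup, so $|\langle S\rangle|\le 4<8$. In $\mathbb{Z}_2\times\mathbb{Z}_4$ the three involutions $(1,0),(0,2),(1,2)$ together span only a Klein four subgroup, and each element of order $4$ sits in a cyclic subgroup of order $4$, so in all subcases $\langle S\rangle\ne G$.

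The main obstacle is the small case analysis for order~$8$, which must be done carefully because two structurally different types of $S$ (an inverse pair versus a pair of involutions) have to be excluded in each non-listed group; once the cycle reduction via Theorem~\ref{Cn} is in place, however, the argument is just a short enumeration inside each candidate group.
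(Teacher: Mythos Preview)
Your proof is correct and follows essentially the same route as the paper: reduce to $C_n$ via $2$-regularity, apply Theorem~\ref{Cn} to force $n\in\{6,7,8\}$, exhibit explicit generating sets for the listed groups, and exclude $Q_8$, $\mathbb{Z}_2\times\mathbb{Z}_4$, $\mathbb{Z}_2\times\mathbb{Z}_2\times\mathbb{Z}_2$. Your exclusion arguments for the three order-$8$ groups are in fact more explicit than the paper's, which simply asserts that no inverse-closed generating set of size two exists in those cases.
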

\begin{proof} By assumption $|S|=2$ and so the graph $Cay(S:G)$ is $2$-regular and so is isomorphic to $C_n$.
	Let $\gamma_{_S}(Cay(S:G))=n-4$. By Theorem \ref{Cn}, $6\le n \le 8$. Since $S$ is an inverse closed subset of $G$ of order two,  $G \not \in \lbrace \mathbb{Z}_2 \times \mathbb{Z}_2 \times \mathbb{Z}_2, \mathbb{Z}_2 \times \mathbb{Z}_4,Q_8 \rbrace$ but the result is reached for other groups with following generators:
	$\mathbb{Z}_6=\langle \bar{1}, \bar{5}\rangle$, $S_3=\langle (1 \,2),(1\,3)\rangle$, $\mathbb{Z}_7=\langle \bar{1}, \bar{6}\rangle$, $\mathbb{Z}_8=\langle \bar{1}, \bar{7}\rangle$ and $D_8=\langle a^2b, a^3b\rangle$. \qed
\end{proof}

\begin{theorem}\label{n-4} 
	Let $G=\langle S \rangle$ be a group of order $n$ where $e \not \in S=S^{-1}$ and $|S|=3$. Then $\gamma_{_S}(Cay(S:G))=n-4$ if and only if $G\simeq D_8, \mathbb{Z}_2\times \mathbb{Z}_4, \mathbb{Z}_{10}, D_{10}, \mathbb{Z}_{12}, D_{12}, \mathbb{Z}_2\times \mathbb{Z}_6$ and $ A_4$.
\end{theorem}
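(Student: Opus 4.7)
The plan is to mirror the strategy of Theorem \ref{n-2}: first translate $\gamma_{_S}(\Gamma_G)=n-4$ into a packing condition on $\Gamma_G$, use it to bound $n$, and then enumerate all cubic Cayley graphs of the allowed orders via the Remarks. Since $|S|=3$, the graph $\Gamma_G$ is cubic, so $n$ is even; Theorem \ref{1} gives $\gamma_{_S}(\Gamma_G)\ge n/2$, hence $n-4\ge n/2$ forces $n\ge 8$. Writing $\omega(g)=n-2|V^-_g|$, the equality $\gamma_{_S}=n-4$ is equivalent to the statement that the maximum value of $|V^-_g|$ over all signed dominating functions $g$ is exactly $2$. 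By Theorem \ref{d3}, the vertices of any $V^-_g$ lie at pairwise distance $\ge 3$; conversely, if $I\subseteq V(\Gamma_G)$ is any set at pairwise distance $\ge 3$, then the labelling $-1$ on $I$ and $+1$ elsewhere is signed dominating, since every closed neighborhood contains at most one element of $I$, giving signed sum $\ge 2$. Thus $\gamma_{_S}(\Gamma_G)=n-4$ if and only if the maximum size of a pairwise distance-$3$ set in $\Gamma_G$ equals $2$.

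The crux is the bound $n\le 12$. For $n\ge 14$ I plan to produce a distance-$3$ packing of size $3$, yielding $\gamma_{_S}\le n-6$ and contradicting the hypothesis. By vertex-transitivity fix $v_1=e$; since $|N_2[e]|\le 1+3+6=10<n$, there exists $v_2\in V(\Gamma_G)\setminus N_2[e]$, so $d(e,v_2)\ge 3$. For $n\ge 21$ the crude bound $|N_2[e]\cup N_2[v_2]|\le 20$ immediately produces $v_3$ at distance $\ge 3$ from both. For the intermediate range $14\le n\le 20$ more care is needed: one case-splits on whether $S$ consists of three involutions or has the form $\{s,s^{-1},t\}$ with $O(t)=2$, computes the precise structure of $N_2[e]$ as a union of $\langle S^2\rangle$-translates in each case, and exhibits $v_3$ explicitly. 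The main obstacle is packaging this uniformly rather than resorting to a brute-force check over the (finitely many) vertex-transitive cubic graphs of each such order.

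With $n\in\{8,10,12\}$ established, I would enumerate the cubic Cayley graphs of these orders using Remarks \ref{D8}, \ref{D8Z2*Z4}, \ref{D10Z10}, and \ref{D12,Z12}, supplemented by the cubic Cayley graphs of $A_4$ and of the dicyclic group of order $12$ (which must be treated separately since they do not appear in the cited Remarks). For each graph in Figures \ref{fig}, \ref{FigureCay(S:D_10)}, \ref{D12Z12} (and the extra ones), I would (i) identify two vertices at distance $\ge 3$ and thereby confirm $\gamma_{_S}\le n-4$, and (ii) verify directly from the figure that there is no triple of vertices at pairwise distance $\ge 3$, so that $\gamma_{_S}$ cannot drop below $n-4$. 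The groups outside the stated list will each fail one of these two checks (typically having diameter $\le 2$ or admitting a distance-$3$ triple), while for each of the eight groups $D_8,\mathbb{Z}_2\times\mathbb{Z}_4,\mathbb{Z}_{10},D_{10},\mathbb{Z}_{12},D_{12},\mathbb{Z}_2\times\mathbb{Z}_6,A_4$ both checks succeed. The converse implication is then immediate by exhibiting, for each of these eight groups, the explicit generating set $S$ of size $3$ whose Cayley graph realizes $\gamma_{_S}(Cay(S:G))=n-4$.
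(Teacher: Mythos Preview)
Your reformulation of $\gamma_{_S}(\Gamma_G)=n-4$ as the condition that the maximum distance-$\ge 3$ packing in $\Gamma_G$ has size exactly $2$ is correct and is precisely the mechanism the paper uses implicitly via Theorem~\ref{d3}. The organizational difference---you propose to bound $n$ globally first and then enumerate via the Remarks, whereas the paper case-splits on the algebraic shape of $S$ and on the intersection pattern of the $N(s_i)$ before bounding $n$ inside each subcase---is secondary; both routes hinge on the same unproved step.

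That step, which you correctly flag as ``the main obstacle,'' is the assertion that every cubic Cayley graph of order $n\ge 14$ admits three vertices at pairwise distance $\ge 3$. This is false. Take $G=D_{14}=\langle r,s\mid r^{7}=s^{2}=e,\ srs=r^{-1}\rangle$ and $S=\{s,\,sr,\,sr^{3}\}$ (three involutions, generating $G$ since $s\cdot sr=r$). One checks that $r^{i}\sim sr^{j}$ iff $j\in\{i,i+1,i+3\}$, so $Cay(S:D_{14})$ is the incidence graph of the Fano plane, i.e.\ the Heawood graph. This graph is bipartite of diameter $3$, and any two vertices in the same bipartition class share a common neighbour, hence lie at distance exactly $2$; consequently every distance-$\ge 3$ packing has size at most $2$, while pairs at distance $3$ do exist. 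Thus $\gamma_{_S}(Cay(S:D_{14}))=14-4=n-4$ although $D_{14}$ is not on the list. The paper's proof fails at the identical point: its Case~1(ii) (three involutions with $N(s_i)\cap N(s_j)=\{e\}$, so $|N[S]|=10$) simply asserts ``$n=12$,'' which the Heawood example refutes. So the case analysis you sketch for $14\le n\le 20$ cannot be completed---no $v_3$ exists here---and the statement itself, not merely the proof, needs amendment.
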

\begin{proof} 
	Let $\Gamma_G$ be a cubic Cayley graph and $\gamma_{_S}(\Gamma_G)=n-4$. Then $n$ is even and $|V^-_f|=2$, where $f$ is a $\gamma_{_S}-$function. By Theorem \ref{1}, $n \geq 8$. Let $S=\lbrace s_1,s_2,s_3\rbrace$. Since Cayley graph is a vertex transitive graph, so we can assume that $f(e)=-1$. So $f(s_i)=f(x)=1$ for every $i \in \lbrace 1,2,3\rbrace$ and every $x \in N[S] \setminus \lbrace e \rbrace$. Hence, there is vertex $a \in G \setminus N[S]$ such that $f(a)=-1$. Now there are two general cases:
	\begin{itemize}
		\item[Case 1.] If $O(s_i)=2$ for every $1 \leq i \leq 3$,
		then the induced subgraph on $S$ is $K_3$ or empty. If the induced subgraph on $S$ is $K_3$, then $Cay(S:G)=K_4$. This is contradiction by $n \ge 8$, so the induced subgraph on $S$ has no edges.
		
		If $N(s_1)=N(s_2)=\{e,x,y\}$, then $x=s_3s_2=s_2s_1$ and $y=s_1s_2=s_3s_1$. It implies that $s_3=s_1s_2s_1=s_2s_1s_2$. If $x'\in N(s_3)$, then $x's_3\in \{s_1s_2\}$ and so $x' \in \{x,y\}$. Hence, $N(s_1)=N(s_2)=N(s_3)$, i.e. $n=6$. This is impossible. Now suppose that $N(s_i)\neq N(s_j)$ for $1 \le i\neq j \le 3$. Let $x \in \bigcap^{3}_{i=1} N(s_i)$. Then $x=s_1s_3=s_3s_2=s_2s_1$. If $y \in N(s_1), z \in N(s_2)$, then $y=s_3s_1=x^{-1}=s_1s_2=z$. So again $n=6$ which is impossible. Now we consider the following 4 cases:
		\begin{itemize}
			\item[(i)] Let $N(s_1) \cap N(s_2)=\lbrace e,x \rbrace$, $N(s_2) \cap N(s_3)=\lbrace e,y\rbrace$ and $N(s_1) \cap N(s_3)=\{e\}$. Then $|N[S]|=8$. Since there is $a \in  G \setminus N[S]$ such that $f(a)=-1$, so $n \ge 10$. If $n \ge 14$, then $|V^-|\ge 3$. Hence, $n \in \{10,12\}$. The groups $\mathbb{Z}_{10}, \mathbb{Z}_{12}, \mathbb{Z}_2 \times \mathbb{Z}_6, A_4$ and $T=\langle \{a,b: a^4=b^3=e, \, a^{-1}ba=b^{-1} \}\rangle$ do not have such this generator. But $Cay(\{a,ab^2,ab^4\} , D_{10}), Cay(\{a,ab,b^3\}, D_{12}) \simeq \Gamma_G$
			\item[(ii)] If $N(s_i) \cap N(s_j)=\lbrace e\rbrace$ for every $i,j \in \lbrace 1,2,3 \rbrace$, then $|N[S]|=10$. Since $n$ is even and $|V^-|=2$, so $n=12$. But for all groups of this order $Cay(S:G) \not\simeq \Gamma_G$.
			\item[(iii)] Let $N(s_1) \cap N(s_2)= \lbrace e,x \rbrace$ and $N(s_2)\cap N(s_3)=N(s_1)\cap N(s_3)=\lbrace e \rbrace$.  Likewise case (i), $n \in \{10,12\}$. For any inverse closed subset of $D_{10}, \mathbb{Z}_{10}$ of cardinality 3, $Cay(S:D_{10}), Cay(S:\mathbb{Z}_{10})\not\simeq \Gamma_G$ but $Cay(\{a,ab,ab^3\}, D_{12})\simeq \Gamma_G$. 
			\item[(iv)] Let $N(s_1) \cap N(s_2)=\lbrace e,x \rbrace$ , $N(s_1) \cap N(s_3)=\lbrace e,y \rbrace$ and $N(s_2) \cap N(s_3)= \lbrace e,z\rbrace$. Then $|N[S]|=7$. Thus $n \in \{8,10,12\}.$ By similar argument in Case (i), $n \neq 10,12$. The groups $Q_8, \mathbb{Z}_8$ and $ \mathbb{Z}_2 \times \mathbb{Z}_4$ do not have this generator. The Cayley graph of group $\mathbb{Z}_2 \times\mathbb{Z}_2 \times \mathbb{Z}_2$ with any inverse closed subsets of cardinality $3$ is isomorphic to $\Gamma_G$ and $Cay(\{a,b^2,ab^3 \},D_8)\simeq \Gamma_G.$
		\end{itemize}
		\item[Case 2.] Let $s_1^{-1}=s_3$ and $O(s_2)=2.$ We show that $s_1s_3 \not\in E(\Gamma_G).$ If not, $s_3=s_1s_2$ and so $s_1=s_3s_2$. Hence, $n=O(s_1)=4$. This is contradiction by $n \ge 8$. Thus $N(s_2)=\{e,x=s_1s_2,y=s_3s_2\}$.\\
		Let $O(s_1)=3$. Then $s_3=s_1^2$ and $s_1s_3 \in E(\Gamma_G)$. If $x$ and $s_1$ are adjacent, then $s_1s_2=x=s_2s_1$. On the other hand, $s_3y^{-1}=s_2$.  So $s_3y \in E(\Gamma_G)$ and so $s_3s_2=y=s_2s_3$. Also $xy^{-1}=s_3$, i.e. $x,y \in E(\Gamma_G)$. This means $n=6$ which is impossible. The same argument applies when $s_3\in N(x)$ and $x \in N(y)$. Hence, $|N[S]|=8$. Since $G$ has some elements of order 2 and 3 and also $|V^-|=2$, $n=12$. The Cayley graph of group $A_4=\langle \{ a,b: a^2=b^3=(ab)^3=e\} \rangle$ with inverse closed subset $S=\{a,b, b^2\}$ is isomorphic to $\Gamma_G$. (Figure \ref{FigureCay(A4)})\\
		If $O(s_1)\neq 3$, then the induced subgraph on $S$ has at least one edge. If $N(s_1)=N(s_2)=\{ e,x,y\}$, then $x=s_1s_2$ and $y=s_1^{-1}s_2$. On the other hand, $ys_1^{-1}=s_1$ and $xs_1^{-1}=s_2$. Thus $s_2=s_1^3$ and $O(s_1)=6$. So $n=6$ which is a contradiction. With similar argument, we can see $N(s_1) \neq N(s_3)$. Hence, for each $i \neq j$, $N(s_i) \neq N(s_j)$. If $\cap_{i=1}^3 N(s_i)=\{e,x\}$, then $xs_2 \in \{s_1,s_1^{-1}\}$. Without loss of generality, suppose that $xs_2=s_1$. So $xs_1^{-1}=s_2$ and $s_2s_1=x=s_1s_2$. Also since $x \in N(s_1^{-1})$, so $xs_1\in \{s_2,s_1^{-1}\}$. If $xs_1=s_2$, then $s_2s_1^{-1}=s_2s_1$ and if $xs_1=s_1^{-1}$, then $s_2=s_1^3$. Both of them are impossible. Now we have four  cases likewise Case 1. Because of Remarks \ref{D10Z10} and \ref{D12,Z12}, Cases (ii) and (iii) can not be happened. 
		\begin{itemize}
			\item[(i)] With similar argument we see that $n \in \{10, 12\}$. The groups $A_4$ and $T=\langle \{a,b: a^4=b^3=e, \, a^{-1}ba=b^{-1} \}\rangle $ and $A_4=\langle \{ a,b: a^2=b^3=(ab)^3=e\} \rangle$ do not have such this generator but for other groups $\Gamma_G$ is isomorphic to following Cayley graphs:\\
			$ Cay\left(\{\bar{2}, \bar{5}, \bar{8}\}:\mathbb{Z}_{10}\right)$, $Cay\left(\{a,b,b^4\}:D_{10}\right)$, $Cay\left(\{a,b^3,ab\}:D_{12}\right)$,\\
			$ Cay\left(\{(\bar{0},\bar{1}),(\bar{1}, \bar{3}),(\bar{0}, \bar{5})\}:\mathbb{Z}_2 \times \mathbb{Z}_6\right)$, $Cay\left(\{\bar{1},  \bar{6}, \bar{11}\}:\mathbb{Z}_{12}\right).$
			\item[(iv)] In this case $|N(S)|=4$. Because of properties of the groups of order $10$ and $12$, $n \not\in \{10,12\}$ and also $G \not\simeq Q_8$. So $G \simeq D_8, \mathbb{Z}_2\times \mathbb{Z}_4$.  For any generator of these groups ($S=\{s_1, s_2, s_1^{-1}\}$), $Cay(S:G)\simeq \Gamma_G$ and this completes the proof.
		\end{itemize}

	\end{itemize}
\end{proof}

\begin{figure}
	\centering
	\begin{tikzpicture}[
	vertex_style/.style={circle,inner sep=0pt,minimum size=0.12cm, fill opacity=0.9, fill=black},
	edge_style/.style={line width=1pt, gray}]
	
	%\useasboundingbox (-5.05,-4.4) rectangle (5.1,5.25);
	
	\begin{scope}[rotate=90]
	\foreach \x/\y in {20/1,70/2,110/3,160/4,200/5,250/6,290/7,340/8}{
		\node[vertex_style] (\y) at (canvas polar cs: radius=2cm,angle=\x){};
	}
	
	\foreach \x/\y in {90/9,180/10,270/11,360/12}
	\node[vertex_style] (\y) at (canvas polar cs: radius=1cm,angle=\x){};
	
	\end{scope}
	\foreach \x/\y in {1/2,2/3,3/4,4/5,5/6,6/7,7/8,8/1}{
		\draw[edge_style] (\x) -- (\y);
	}
	\foreach \x/\y in {9/11,10/12}{
		\draw[edge_style] (\x) -- (\y);
	}
	\foreach \x/\y in {1/12,12/8,9/2,3/9,4/10,5/10,6/11,7/11}{
		\draw[edge_style] (\x) -- (\y);
	}
	
	\end{tikzpicture}
	\caption{The cubic Cayley graph $Cay(S: A_4).$}
	
	\label{FigureCay(A4)}
\end{figure}
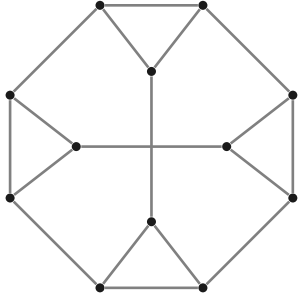


\begin{thebibliography}{99}
	\bibitem {dunbar} J. E. Dunbar, S. T. Hedetniemi, M. A. Henning and P. J.
	Slater, Signed domination in graphs, {\it in: Graph Theory,
		Combinatorics, and Applications}, (John Wiley and Sons,
	1995), 311-322.
	
	\bibitem{3} O. Favaron, Signed domination in regular graphs, {\it Discrete Mathematics}, 158 (1996), 287-293.
	
	\bibitem{fink} J. F. Fink, M. S. Jacobson, L. F. Kinch and J. Roberts, On geraphs having domination number half their order, {\it Periodica Mathematica Hungarica}, 16:287-293, 1985.
	
	
	\bibitem{111} P. Flach and L. Volkmann, Estimations for the domination number of a graph, {\it Discrete Mathematics}, (1990), 145-151.
	
	\bibitem{4} Z. F\"{u}redi and D. Mubayi, Signed domination in regular graphs and set-systems, {\it Journal of Combinatorial Theory},
	Series B, 76(1999), 223-239.
	
	
	
	\bibitem{7} R. Haas and T.B. Wexler, Signed domination numbers of a graph and its complement, {\it Discrete Mathematics}, 283 (2004), 87-92.
	
	
	\bibitem{Haynes} T. W. Haynes, S.T. Hedetniemi, and P. J. Slater, {\it Domination in graphs: advanced topics}, volume 209 of Monographs and Textbooks in Pure and Applied Mathematics, 1998.
	
	\bibitem{hening} M. A. Henning and P. J. Slater, Inequalities relating domination parameters in cubic graphs, {\it Discrete Mathematics}, 158 (1996) 87-98.
	
	
	\bibitem{113} O. Ore, {\it Theory of graphs}, American Mathematical Society, 38, 1962.
	
	\bibitem{vatandoost} F. Ramezani, E. Vatandoost, Domination and Signed domination number of Cayley graphs, {\it Iranian Journal of Mathematical Sciences and Informatics}, 14, no. 1 (2019): 35-42.
	
	\bibitem{9} S. M. Sheikholeslami, Forcing signed domination numbers in graphs, {\it Matemati\v{c}ki Vesnik}, 59 (2007), 171-179.
	
	\bibitem{10} L. Volkman and B. Zelinka, Signed domatic number of a graph, {\it Discrete Applied Mathematics}, 150 (2005), 261-267.
	
	
	\bibitem{Zelinka} B. Zelinka, Signed and minus domination in bipartite graphs, {\it Czechoslovak Mathematical Journal}, 56 (2006), 587-590.
	
	
\end{thebibliography}
\end{document}